\numberwithin{equation}{section}
\newcommand{\IE}{\mathbb{E}}
\newcommand{\IP}{\mathbb{P}}
\newcommand{\IR}{\mathbb{R}}
\newcommand{\cE}{\mathcal{E}}
\newcommand{\cF}{\mathcal{F}}
\newcommand{\cH}{\mathcal{H}}
\newcommand{\cS}{\mathcal{S}}
\newcommand{\ud}{\mathrm{d}}
\newcommand{\uds}{\mathrm{d}s}
\newcommand{\udws}{\mathrm{d}W_s}
\newcommand{\1}{\mathbbm{1}}
\newcommand{\bit}{\begin{itemize}}
\newcommand{\eit}{\end{itemize}}
\theoremstyle{plain}
\newtheorem{theo}{Theorem}[section]
\newtheorem{lemma}[theo]{Lemma}
\newtheorem{coro}[theo]{Corollary}
\newtheorem{defi}[theo]{Definition}
\newtheorem{remark}[theo]{Remark}
\newtheorem{assump}[theo]{Assumption}
\title{A note on comonotonicity and positivity of the control components of decoupled quadratic FBSDE}
\author{
	\normalsize Gon\c calo Dos Reis \\[8pt]
        \small  Technische Universit\"at Berlin \\
        \small  10623 Berlin, Germany \\
        \small  and \\
	\small  CMA/FCT/UNL \\
        \small  2829-516 Caparica, Portugal \\
        \small  dosreis@math.tu-berlin.de
\and
        \normalsize  Ricardo J.N. dos Reis \\[8pt]
        \small  IDMEC, Instituto Superior T\'ecnico\\ 
        \small  Technical University Lisbon  \\
        \small  Av. Rovisco Pais \\
        \small  1049-001 Lisboa\\
        \small  Portugal \\
        \small  ricardo.reis@ist.utl.pt 
\vspace*{0.8cm}
}
\date{ \currenttime, \ddmmyyyydate\today}
\begin{document}

\selectlanguage{english}

\maketitle

\begin{abstract}
In this small note we are concerned with the solution of Forward-Backward Stochastic Differential Equations (FBSDE) with drivers that grow quadratically in the control component (quadratic growth FBSDE or qgFBSDE). The main theorem is a comparison result that allows comparing componentwise the signs of the control processes of two different qgFBSDE. As a byproduct one obtains conditions that allow establishing the positivity of the control process.
\end{abstract}
{\bf 2010 AMS subject classifications:} 
Primary: 60H30. 
Secondary: 60H07, 60J60.\\
%
{\bf Key words and phrases:} BSDE, forward-backward SDE, quadratic
growth, comparison, positivity, stochastic calculus of variations, Malliavin calculus, Feynman-Kac formula.

\section{Introduction}
This small note is concerned with forward-backward stochastic differential equations (BSDEs) in the Brownian framework, i.e. equations following, for some measurable functions $b$, $\sigma$, $f$ and $g$, the dynamics 
\begin{align*}
X_s^{t,x}&=x+\int_t^s b(r,X^{t,x}_r)\ud r+\int_t^s \sigma(r,X^{t,x}_r)\ud W_r,\\
Y^{t,x}_s &=g(X^{t,x}_T) +\int_s^T f(r,X^{t,x}_r,Y^{t,x}_r,Z^{t,x}_r)\ud s-\int_t^T Z^{t,x}_r\ud W_r,
\end{align*}
where $W$ a $d$-dimensional Brownian motion, $(t,x)\in[0,T]\times\IR^m$ and $s\in[t,T]$. The function $f$ is called generator or driver while $g$ is named the terminal condition function. The solution of the FBSDE is the triple of adapted processes $(X,Y,Z)$; $Z$ is called the control process.

In the last 30 years much attention has been given to this type of equations due
to their importance in the fields of optimal control and finance. The standard
theory of FBSDE is formulated under the canonical Lipschitz assumption (see for
example \cite{EPQ} and references), but in many financial problems drivers $f$
which have quadratic growth in the control component appear i.e.~when $f$
satisfies a growth condition of the type $|f(t,x,y,z)|\leq C(1+|y|+|z|^2)$. The
particular relation between FBSDE with drivers of quadratic growth in the
control component (qgFBSDE) and the field of finance, stochastic control and
parabolic PDE can be illustrated by the works  \cite{HIM2005}, \cite{HPdR10},
\cite{EPQ} and references therein.

One of the fundamental results in BSDE or FBSDE theory is the so called
comparison theorem that allows one to compare the $Y$ components of of the
solution of two BSDEs. In rough, given a terminal condition function $g^i$, a
driver $f^i$ and the corresponding FBSDE solution $(X,Y^i,Z^i)$ for
$i\in\{1,2\}$, if $g^1$ dominates $g^2$ and $f^1$ dominates $f^2$ in some sense
then this order relation is expected to carry over to the $Y$ components, i.e.
$Y^1$ dominates $Y^2$ in some sense. 
Such a result is however not possible for the control components $Z^i$. In this short note we give a type of comparison result for the control components $Z$, a so called comonotonicity result. This result allows one to compare the signs of the control processes $Z^1$ and $Z^2$ componentwise and as a side product one finds sufficient conditions to establish the positivity of the control process for a single FBSDE.

This type of results can be useful in several situations, for instance in the numerics for such equations, since they allow to establish a priori heuristics that can improve the quality of the numerical approximation. This point of view is pertinent as the applications of FBSDE extend to the field of fluid mechanics (see \cite{freidosreis2011}).

A possible application of the results presented in this note lies in the problematic of showing the existence (and smoothness) of marginal laws of $Y$ which are absolutely continuous with respect to the Lebesgue measure. This type of analysis involves showing the strict positivity of the Malliavin variance (in rough the $Z$ component) of the solution of the FBSDE, (see e.g. \cite{MR2134722}). The results in \cite{MR2134722} were established for FBSDE whose driver function satisfies a standard Lipschitz condition in its spatial components and it is not possible to adapt the proof to cover the qgFBSDE setting of this work.

From another point of view, the comonotonicity result is an interesting result in the context of economic models of equilibrium pricing when analyzed in the qgFBSDE framework. In such framework the equilibrium market price of risk can be characterized in terms of the control process of the solution to a qgFBSDE. The difficulty is that the individual optimization problems underlying the characterization of the equilibrium requires the equilibrium volatility (the $Z$ component of the solution to a certain qgFBSDE) to satisfy an exponential integrability condition as well as a positivity condition. Since the results of \cite{MR2134722} cannot be applied or adapted to the qgFBSDE setting, the comonotonicity result presented here (and its corollary) provides conditions that ensure the positivity of the relevant process and hence may prove to be very useful in equilibrium analysis. An example of such type of problems can be found for example in \cite{HPdR10}.

The results of this work originate in \cite{05CKW} where the authors give a comonotonicity result for FBSDE satisfying a standard Lipschitz condition and where the driver function is independent of the diffusion process $X$. In \cite{reis2011} the author extended the results of \cite{05CKW} to the qgFBSDE setting but was not able to include the dependence on $X$ in the driver. The dependence of $f$ in $X$ is something that is quite common in the financial framework and that makes the applicability of \cite{reis2011} limited. This short note presents a full generalization of the results of \cite{reis2011} where the driver is now allowed to depend on $X$, this makes the conditions and analysis more involved but makes the result general enough that it can now be ``broadly'' applied to the standard financial setting where the driver $f$ almost always depends on the underlying diffusion $X$.

The note is organized as follows: In Section 2 we introduce some notation and recall some known results. The main results are then stated and proved in Section 3.

\section{Preliminaries}

Throughout fix $T>0$. We work on a canonical Wiener space $(\Omega, \cF,  \IP)$ carrying a $d$-dimensional Wiener process $W = (W^1,\cdots, W^d)$ restricted to the time interval $[0,T]$ and we denote by $\cF=(\cF_t)_{t\in[0,T]}$ its natural filtration enlarged in the usual way by the $\IP$-zero sets. 

Let $p\geq 2$, then we denote by $\cS^p(\IR^m)$ the space of all measurable processes $(Y_t)_{t\in[0,T]}$ with values in $\IR^m$ normed by $\| Y \|_{\cS^p} = \IE[\sup_{t \in [0,T]}|Y_t|^p ]^{{1}/{p}}$ and by $\cS^\infty(\IR^m)$ its subspace  of bounded measurable processes. We also denote by $\cH^p(\IR^m)$ the space of all progressively measurable processes $(Z_t)_{t\in[0,T]}$ with values in $\IR^m$ normed by $\|Z\|_{\cH^p} = \IE[\big( \int_0^T |Z_s|^2 \ud s \big)^{p/2} ]^{{1}/{p}}$. 

For vectors $x = (x^1,\cdots, x^m)\in \IR^m$ we write $|x| = (\sum_{i=1}^m (x^i)^2)^{\frac{1}{2}}$. $\nabla$ denotes the canonical gradient operator and for a function $h(x,y):\IR^m\times\IR^d\to \IR$ we write $\nabla_x h$ or $\nabla_y h$ to refer to the first derivatives with relation to $x$ and $y$ respectively.

We work with decoupled systems of forward and backward stochastic differential equations (FBSDE) for $(t,x)\in[0,T]\times\IR^m$ and $s\in[t,T]$
\begin{align}
\label{sde}
X_s^{t,x}&=x+\int_t^s b(r,X^{t,x}_r)\ud r+\int_t^s \sigma(r,X^{t,x}_r)\ud W_r,\\
\label{bsde}
Y^{t,x}_s &=g(X^{t,x}_T) +\int_s^T f(r,X^{t,x}_r,Y^{t,x}_r,Z^{t,x}_r)\ud s-\int_t^T Z^{t,x}_r\ud W_r,
\end{align}
for some measurable functions $b$, $\sigma$, $g$ and $f$. 

We now state our assumptions.
\begin{assump}\label{H1}
The function $b:[0,T]\times\IR^m\to \IR^m$ and $\sigma:[0,T]\times\IR^m\to \IR^{m\times d}$ are continuously differentiable in space with derivatives uniformly bounded by a constant $K$ and are $\frac12$-H\"older continuous in time. $\sigma$ is uniformly elliptic and $|b(\cdot,0)|$ and $|\sigma(\cdot,0)|$ are uniformly bounded.  

$g:\IR^m\to\IR$ is bounded, continuously differentiable with bounded derivatives. $f$ is a continuously differentiable function in space, uniformly continuous in the time variable and satisfies for some $M>0$ for all $(t,x,y,z)\in[0,T]\times \IR^m\times \IR\times \IR^d$, $|f(t,x,y,z)|\leq M (1+|y|+|z|^2)$ as well as 
\begin{align*}
|\nabla_x f(t,x,y,z)|\leq M (1+|y|+|z|^2),\quad
|\nabla_y f(t,x,y,z)|\leq M, \quad
|\nabla_z f(t,x,y,z)|\leq M (1+|z|).
\end{align*}
\end{assump}

\begin{assump}
\label{H2}
The spatial derivatives $\nabla b$, $\nabla \sigma$ and $\nabla g$ satisfy a standard Lipschitz condition in their spatial variables with Lipschitz constant $K$. 

$\nabla_y f$ satisfies a standard Lipschitz condition with Lipschitz constant $K$ and 
for all $t\in[0,T]$, $x,x'\in\IR^m$, $y,y'\in\IR$ and $z,z'\in\IR^d$ it holds that 
\begin{align*}
&|\nabla_x  f(t,x,y,z)-\nabla_x f(t,x',y',z')|
\\
&\hspace{1cm}
\leq K\big(1+|z|+|z'|\big)\big\{
(1+|z|+|z'|\big)|x-x'|+|y-y'|+|z-z'|\big\},\\
&|\nabla_z  f(t,x,y,z)-\nabla_z f(t,x',y',z')|
\\
&\hspace{1cm}
\leq K \big\{(1+|z|+|z'|) |x-x'|+|y-y'|+|z-z'|\big\},
\end{align*}
\end{assump}
The next theorem compiles several results found throughout \cite{AIdR07}, \cite{IdR2010} and \cite{reis2011}.
\begin{theo}\label{compilationtheorem}
Let Assumption \ref{H1} hold then for any $p\geq 2$ and $(t,x)\in[0,T]\times\IR$ there exists a unique solution $\Theta^{t,x}=(X^{t,x},Y^{t,x},Z^{t,x})$ of FBSDE \eqref{sde}-\eqref{bsde} in the space $\cS^p\times\cS^\infty\times\cH^p$ and\footnote{BMO refers to the class of Bounded mean oscillation martingales, see \cite{IdR2010} or \cite{kazamaki} for more details.} $\int_0^\cdot Z\ud W \in BMO$.

The variational process of $\Theta^{t,x}$ exists and satisfies for $s\in[t,T]$
\begin{align}
\label{nablasde}
\nabla_x X_s^{t,x}&=I_d+\int_t^s \nabla_x b(r,X^{t,x}_r)\nabla_x X^{t,x}_r\ud r+\int_t^s \nabla_x\sigma(r,X^{t,x}_r)\nabla_x X^{t,x}_r\ud W_r,\\
\label{nablabsde}
\nabla_x Y^{t,x}_s &=\nabla_x g(X^{t,x}_T)\nabla_x X_T^{t,x} +\int_s^T \langle (\nabla f)(r,\Theta^{t,x}_r),\nabla_x \Theta^{t,x}_r \rangle\ud s-\int_t^T \nabla_x Z^{t,x}_r\ud W_r.
\end{align}
The triple $\Theta^{t,x}$ is Malliavin differentiable and its Malliavin
derivatives are given by $D \Theta^{t,x} = (D X^{t,x},DY^{t,x},DZ^{t,x})$. The
process $(Z_s^{t,x})_{s\in[t,T]}$ has continuous paths, $Z^{t,x}\in\cS^p$ and
for $0\leq t\leq u\leq s\leq T$ the following representation holds
\begin{align}
\label{representation}
D_s Y^{t,x}_s = Z^{t,x}_s,\ \IP\text{-}a.s. \quad \textrm{ and } \quad D_u Y^{t,x}_s = \nabla_x Y^{t,x}_s (\nabla_x X^{t,x}_u)^{-1} \sigma(u,X^{t,x}_u),\ \IP\text{-}a.s.
\end{align}
There exists a continuous function $u:[0,T]\times\IR^m\to\IR$ such that for all $(t,x)\in[0,T]\times \IR^m$ and $s\in[t,T]$ it holds that $Y^{t,x}_s=u(s,X_s^{t,x})$  $\IP$-a.s.. 

Under Assumption \ref{H2} the function $u$ is continuously differentiable in its spatial variables and $Z^{t,x}_s=(\nabla_x u)(s,X_s^{t,x})\sigma(s,X^{t,x}_s)$ $\IP$-a.s. for all $0\leq t\leq s\leq T$ and $x\in\IR^m$.
\end{theo}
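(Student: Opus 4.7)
The plan is to assemble the seven claims stage by stage, each one being available in the cited literature \cite{AIdR07,IdR2010,reis2011}; I would present the argument in four stages in the following order.

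Stage 1 (existence, uniqueness and BMO). The forward SDE \eqref{sde} falls under classical Lipschitz SDE theory by Assumption \ref{H1}, giving $X^{t,x}\in\cS^p$ for every $p\geq 2$. For the BSDE I invoke a Kobylanski-type result for quadratic drivers with bounded terminal data; Assumption \ref{H1} ensures $g$ is bounded and $|f|\leq M(1+|y|+|z|^2)$, delivering a unique $(Y,Z)$ with $Y\in\cS^\infty$. The $BMO$-property of $\int Z\,\ud W$ is then a one-line consequence: applying It\^o to $\exp(\alpha Y)$ for $\alpha$ large enough (relative to $M$ and $\|Y\|_{\cS^\infty}$) and taking conditional expectations at an arbitrary stopping time $\tau$ yields the uniform bound $\IE_\tau\big[\int_\tau^T |Z_s|^2\ud s\big]\leq C$, which is exactly the $BMO$ norm.

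Stage 2 (variational process). Equation \eqref{nablasde} is standard for Lipschitz SDEs. For the backward component I would formally differentiate \eqref{bsde} in $x$; the resulting linear BSDE \eqref{nablabsde} has a driver that is linear in $(\nabla_x Y,\nabla_x Z)$, whose coefficients involve $\nabla_z f(\cdot,\Theta)$ (linear in $|z|$ by Assumption \ref{H1}), hence belong to $BMO$ thanks to Stage 1. A Girsanov change of measure driven by $\int \nabla_z f(r,\Theta_r)\ud W_r$ then linearizes the equation to a tractable form. Justifying this formal differentiation is the main technical step: I would follow the mollification/truncation scheme of \cite{AIdR07}, passing to the limit in $\cS^p\times\cH^p$ using the $BMO$-control and the Lipschitz estimates on $\nabla_x f$, $\nabla_y f$, $\nabla_z f$ provided by Assumption \ref{H2}.

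Stage 3 (Malliavin differentiability and representation). Malliavin differentiability of $X$ is classical. To obtain $(DY,DZ)$ one repeats the differentiation argument of Stage 2 in the Malliavin sense along the perturbation $\1_{[0,u]}$; by uniqueness of the resulting linear BSDE, $(D_u Y, D_u Z)$ coincides with the solution of \eqref{nablabsde} multiplied by $(\nabla_x X^{t,x}_u)^{-1}\sigma(u,X^{t,x}_u)$, which gives the second formula in \eqref{representation}. Evaluating this identity at $u=s$ together with the general identity $D_s Y_s=Z_s$ (a consequence of continuity of Malliavin derivatives on the diagonal) yields the first formula; this representation also delivers continuity of $s\mapsto Z_s$ and the $\cS^p$-bound on $Z^{t,x}$ from the corresponding bounds on $\nabla_x X$, $(\nabla_x X)^{-1}$, $\nabla_x Y$ and $\sigma(\cdot,X)$.

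Stage 4 (Feynman--Kac and gradient formula). The Markov property of $X$ and uniqueness of the BSDE imply that $Y^{t,x}_t$ is both $\cF_t$-measurable and deterministic at $t$, so $u(t,x):=Y^{t,x}_t$ is well defined; continuity follows from the $\cS^p$-stability of the FBSDE solution with respect to $(t,x)$, and $Y^{t,x}_s=u(s,X_s^{t,x})$ by the flow property. Under Assumption \ref{H2}, the $\cS^p$-estimates on $\nabla_x Y$ from Stage 2 yield $u\in C^1$ in $x$ (standard argument identifying $\nabla_x u(t,x)$ as the limit of difference quotients of $Y^{t,x}_t$). Finally, applying It\^o to $u(s,X_s^{t,x})$ and identifying the martingale part with $\int Z^{t,x}_s\ud W_s$ in \eqref{bsde} gives $Z^{t,x}_s=(\nabla_x u)(s,X_s^{t,x})\sigma(s,X_s^{t,x})$. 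The principal obstacle throughout is Stage 2: the quadratic growth in $z$ forces one to work with $BMO$-coefficients and Girsanov transformations rather than with bounded ones, and the approximation/limit passage there is what requires the strongest parts of Assumption \ref{H2}.
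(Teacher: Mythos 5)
Your proposal reconstructs, in outline, exactly the arguments that the paper delegates to the literature: the paper's own ``proof'' is a compilation of citations (Kobylanski-type existence, the exponential-transform/BMO argument, the truncation--mollification scheme of \cite{AIdR07} and \cite{IdR2010} for variational and Malliavin differentiability, and the Markov/Feynman--Kac results of \cite{reis2011} and \cite{10AIdR}), and your four stages match those references point for point. So in substance this is the same route, just unpacked.

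One step, however, is not justified as written. In Stage 4 you obtain $Z^{t,x}_s=(\nabla_x u)(s,X^{t,x}_s)\sigma(s,X^{t,x}_s)$ by ``applying It\^o to $u(s,X^{t,x}_s)$''. Under Assumption \ref{H2} the function $u$ is only known to be continuously differentiable in the \emph{spatial} variable; nothing gives you $u\in C^{1,2}$, so It\^o's formula cannot be applied to $u(s,X_s)$ directly (one would need an It\^o--Krylov or viscosity-type argument to make that rigorous, which is not available here). The identification should instead come from material you already have: the chain rule gives $\nabla_x Y^{t,x}_s=(\nabla_x u)(s,X^{t,x}_s)\nabla_x X^{t,x}_s$, and combining this with the diagonal representation $Z^{t,x}_s=D_sY^{t,x}_s=\nabla_x Y^{t,x}_s(\nabla_x X^{t,x}_s)^{-1}\sigma(s,X^{t,x}_s)$ from your Stage 3 yields the gradient formula with no extra regularity. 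Relatedly, be slightly more careful with $D_sY_s=Z_s$: the Clark--Ocone/Malliavin identification a priori holds only for $\IP\otimes\mathrm{Leb}$-a.a.\ $s$, and it is the path continuity of $Z$ (which you establish from the representation) that upgrades it to all $s$, as the paper explicitly emphasizes. With these two repairs the argument is complete and coincides with the one the paper cites.
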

\begin{proof}
Existence and uniqueness of the solution is quite standard either for the SDE (e.g. \cite{Protter2005}) or for the BSDE (see e.g. Theorem 1.2.12 and Lemma 1.2.13 in \cite{reis2011}). 

The variational differentiability and representation formulas as well as the path continuity of $Z$ follow from Theorems 2.8, 2.9 and 5.2 in \cite{IdR2010} (or Theorems 3.1.9, 3.2.4 and 4.3.2 of \cite{reis2011}). We emphasize that due to the continuity of the involved processes, the representation formulas \eqref{representation} hold $\IP$-a.s. for all $t\in[0,T]$ and not just $\IP\otimes \textrm{Leb}$-a.a.

Lastly, the Markov property of the $Y$ process is rather standard (see Theorem
4.1.1 of \cite{reis2011}).  The differentiability assumptions on the driver and
terminal condition function (Assumption \ref{H2}) ensure that the function $u$
is continuously differentiable in the spatial variables. A detailed proof of
this can be found either in Theorem 7.7 in \cite{10AIdR} or Theorem 4.1.2 in
\cite{reis2011}.
\end{proof}

\section{A comonotonicity result for quadratic FBSDE}

In this section we work with a $d$-dimensional Brownian motion $W$ on the time interval $[0,T]$ for some positive finite $T$. Throughout let $(t,x)\in [0,T]\times \IR^m$. Our standing assumption for this section is as follows.
\begin{assump}\label{H}
Let Assumptions \ref{H1} and \ref{H2} hold. Assume that $m=1$ and $d\geq 1$
\end{assump}
\begin{remark}
\label{caseofmdiff1}
We note that it is possible to write the results of this section for multidimensional SDE systems (i.e.~when $m\geq 1$) under the assumption that $\sigma$ is a square diagonal matrix  and the system of forward equations is fully decoupled. There are many applications where such an assumption takes place (e.g.  \cite{HPdR10}). We write these result with $m=1$ to simplify the presentation of this short note.
\end{remark}

For each $i\in\{1,2\}$ we define the SDE \eqref{sde} with $b_i$ and $\sigma_i$ and BSDE \eqref{bsde} with terminal condition and driver given by $g_i$ and $f_i$. We denote the respective solution of the system by $(X^{t,x,i}_s,Y^{t,x,i}_s,Z^{t,x,i}_s)_{s\in[t,T]}$ valued in $\IR\times\IR\times\IR^d$ for $(t,x,i)\in[0,T]\times\IR\times\{1,2\}$.

We define the vector-product operator, ``$\odot$'', as $\odot:\IR^d\times\IR^d\to\IR^d$ such that  
\begin{align}\label{odotoperator}
a\odot b= (a_1b_1, \ldots, a_d b_d),\qquad \textrm{for any } a=(a_1,\cdots,a_d),b=(b_1,\cdots,b_d)\in \IR^d.
\end{align}
With the convention that $a\odot b\geq 0$ means that for each $i\in\{1,\ldots,d\}$, $a_i b_i\geq 0$.

The aim of this section is to explore conditions such the following statement holds
\[Z^{t,x,1}_s \odot Z_s^{t,x,2} \geq 0,\quad 
\IP\text{-a.s.},\quad \textrm{for any } (t,x)\in[0,T]\times\IR\textrm{ and }s\in[t,T].
\]

\begin{defi}[Comonotonic functions]
We say that two measurable functions $g,h:\IR\to\IR$ are comonotonic if they are \emph{monotone} and \emph{have the same type of monotonicity}, i.e.~if $g$ is increasing or decreasing then $h$ is also increasing or decreasing respectively. We say that $g$ and $h$ are strictly comonotonic if they are comonotonic and strictly monotonic.
\end{defi}
We now state our main theorem.
\begin{theo}\label{comono-theo-1}
Let Assumption \ref{H} hold and for $(t,x)\in[0,T]\times\IR$ define $(X^{t,x,i},Y^{t,x,i},Z^{t,x,i})$ as the unique solution of FBSDE  (\ref{sde})-(\ref{bsde}) for $i\in\{1,2\}$. Suppose that $x\mapsto g_i(x)$ and $x\mapsto f_i(\cdot,x,\cdot,\cdot)$ are comonotonic for all $i\in\{1,2\}$ and further, that $g_1,g_2$ are also comonotonic\footnote{This implies that $x\mapsto f_1(\cdot,x,\cdot,\cdot)$ and $x\mapsto f_2(\cdot,x,\cdot,\cdot)$ are comonotonic as well.}. If it holds for all $s\in[t,T]$ that
\begin{align}
\label{sigmaineq}
\sigma_1(s,X^{t,x,1}_s)\odot \sigma_2(s,X^{t,x,2}_s)\geq 0,\quad \IP\text{-}a.s.,
\end{align}
then
\begin{align}
\label{eq:ZZineq}
Z^{t,x,1}_s \odot Z^{t,x,2}_s \geq 0,\quad 
\IP\text{-}a.s.,\quad \textrm{for any } (t,x)\in[0,T]\times\IR\textrm{ and }s\in[t,T].
\end{align}
If $g_1,\,g_2$ are strictly comonotonic and inequality \eqref{sigmaineq} holds strictly then \eqref{eq:ZZineq} is also strict.
\end{theo}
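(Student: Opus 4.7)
The plan is to reduce the inequality on the $Z$-processes, via the Markov / Feynman--Kac representation in Theorem~\ref{compilationtheorem}, to a sign property for the spatial derivative of the value function, and then to establish that sign property through a conditional-expectation representation of the variational BSDE \eqref{nablabsde}.

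First, using $Z^{t,x,i}_s = (\nabla_x u_i)(s,X^{t,x,i}_s)\,\sigma_i(s,X^{t,x,i}_s)$, componentwise one has
\[
(Z^{t,x,1}_s)_j (Z^{t,x,2}_s)_j = (\nabla_x u_1)(s,X^{t,x,1}_s)\,(\nabla_x u_2)(s,X^{t,x,2}_s)\cdot \bigl(\sigma_1(s,X^{t,x,1}_s)\bigr)_j \bigl(\sigma_2(s,X^{t,x,2}_s)\bigr)_j.
\]
Since the last factor is nonnegative by \eqref{sigmaineq}, it suffices to show that for each $i$ the scalar $(\nabla_x u_i)(s,X^{t,x,i}_s)$ has a constant sign determined by the direction of monotonicity of $g_i$, and that those signs agree for $i=1$ and $i=2$. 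The tangent flow $\nabla_x X^{t,x,i}$ solves a linear scalar SDE with unit initial condition, so it is a Dol\'eans--Dade exponential and strictly positive; via $\nabla_x Y^{t,x,i}_s = (\nabla_x u_i)(s,X^{t,x,i}_s)\,\nabla_x X^{t,x,i}_s$ the sign of $\nabla_x u_i$ coincides with that of $\nabla_x Y^{t,x,i}_s$.

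Next, read \eqref{nablabsde} as a linear BSDE in $(\nabla_x Y^{t,x,i},\nabla_x Z^{t,x,i})$: terminal value $\nabla_x g_i(X^{t,x,i}_T)\nabla_x X^{t,x,i}_T$, bounded discount coefficient $\nabla_y f_i(\cdot,\Theta^{t,x,i})$, drift coefficient $\nabla_z f_i(\cdot,\Theta^{t,x,i})$ multiplying $\nabla_x Z^{t,x,i}$, and forcing $\nabla_x f_i(\cdot,\Theta^{t,x,i})\nabla_x X^{t,x,i}$. By Theorem~\ref{compilationtheorem}, $\int_0^\cdot Z^{t,x,i}\,\ud W\in BMO$; combined with $|\nabla_z f_i|\leq M(1+|z|)$ this yields that $\int_0^\cdot \nabla_z f_i(r,\Theta^{t,x,i}_r)\,\ud W_r$ is itself BMO, its stochastic exponential is a true martingale by Kazamaki, and it satisfies a reverse H\"older inequality. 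A Girsanov change of measure $\IP\rightsquigarrow Q^i$ absorbs the $\nabla_z f_i\cdot\nabla_x Z^{t,x,i}$ term, while the standard exponential-discount trick handles $\nabla_y f_i$, giving
\[
\nabla_x Y^{t,x,i}_s = \IE^{Q^i}\Bigl[\,\Gamma^i_{s,T}\,\nabla_x g_i(X^{t,x,i}_T)\nabla_x X^{t,x,i}_T + \int_s^T \Gamma^i_{s,r}\,\nabla_x f_i(r,\Theta^{t,x,i}_r)\,\nabla_x X^{t,x,i}_r\,\ud r \,\Big|\,\cF_s\Bigr],
\]
with strictly positive discount weights $\Gamma^i_{s,\cdot}$ built from $\nabla_y f_i$. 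Comonotonicity of $g_i$ and $f_i(\cdot,\cdot,\cdot,\cdot)$ in $x$ forces $\nabla_x g_i$ and $\nabla_x f_i$ to share a single sign; combined with $\nabla_x X^{t,x,i}>0$ this pins down a constant sign for $\nabla_x Y^{t,x,i}_s$, and the hypothesis that $g_1,g_2$ are comonotonic makes these signs agree across $i$, yielding $\nabla_x u_1\cdot\nabla_x u_2\geq 0$ and hence \eqref{eq:ZZineq}.

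The main technical obstacle is justifying the Girsanov step rigorously in the quadratic setting: the forcing term $\nabla_x f_i(\cdot,\Theta^{t,x,i})\nabla_x X^{t,x,i}$ grows quadratically in $z$ (by Assumption~\ref{H1}), so one must combine the BMO bound on $Z^{t,x,i}$, the $\cS^p$-integrability of $\nabla_x X^{t,x,i}$, and the reverse H\"older inequality for the density of $Q^i$ to ensure both the forcing integral and the local-martingale term in the linear BSDE are $Q^i$-integrable (so conditional expectations make sense and stochastic integrals vanish). For the strict conclusion, strict monotonicity of the $g_i$ implies $\nabla_x g_i\neq 0$ on a set of positive Lebesgue measure, and the uniform ellipticity of $\sigma_i$ (Assumption~\ref{H1}) guarantees that $X^{t,x,i}_T$ has a strictly positive density; the representation above then gives $\nabla_x Y^{t,x,i}_s$ with a strict sign, and the strict version of \eqref{sigmaineq} propagates to strict inequality in \eqref{eq:ZZineq}.
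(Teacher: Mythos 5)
Your proof is correct, but it takes a genuinely different route from the paper's. The paper reduces \eqref{eq:ZZineq} to the same product identity \eqref{comono-zodotz}, but then obtains the sign of $(\nabla_x u_1)(\nabla_x u_2)$ softly: a comparison theorem for SDEs gives that $x\mapsto X^{t,x,i}_T$ is increasing, so $x\mapsto g_i(X^{t,x,i}_T)$ and $x\mapsto f_i(\cdot,X^{t,x,i}_\cdot,\cdot,\cdot)$ inherit comonotonicity, and Kobylanski's comparison theorem for quadratic BSDEs then yields that $x\mapsto Y^{t,x,1}$ and $x\mapsto Y^{t,x,2}$ are a.s.\ comonotonic, i.e.\ \eqref{aux-for-strict}. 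You instead linearize the variational BSDE \eqref{nablabsde} and derive a Girsanov-plus-discounting conditional-expectation formula for $\nabla_x Y^{t,x,i}$ --- which is essentially the technique the paper reserves for the weaker positivity lemma at the end of Section 3 (formula \eqref{simplifiedeq}), there implemented with difference quotients rather than derivatives. Both arguments are sound. The paper's route is shorter and outsources all integrability issues to the quoted comparison theorem; yours must justify the measure change and the integrability of the quadratically growing forcing term $\nabla_x f_i(\cdot,\Theta^{t,x,i})\nabla_x X^{t,x,i}$ by hand (BMO property of $\int Z\,\ud W$, Kazamaki's criterion, reverse H\"older, $\cS^p$-bounds on $\nabla_x X^{t,x,i}$), all of which is indeed available from Theorem \ref{compilationtheorem} and Assumption \ref{H1}, so the obstacle you flag is real but surmountable. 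In exchange your representation gives more: it pins down the sign of each $\nabla_x Y^{t,x,i}$ separately and makes the strict case transparent (strictly positive discount weights, strictly positive transition density from uniform ellipticity), whereas the paper's assertion that strict comonotonicity of $g_1,g_2$ makes \eqref{aux-for-strict} strict is stated without detail. One small point common to both proofs: what \eqref{comono-zodotz} actually requires is the sign of $(\nabla_x u_1)(s,X^{t,x,1}_s)\,(\nabla_x u_2)(s,X^{t,x,2}_s)$ at \emph{different} spatial arguments; this is covered because each $\nabla_x u_i$ has a constant sign on all of $[0,T]\times\IR$, which both arguments deliver.
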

\begin{proof}
Throughout take $t\in[0,T]$, $x\in\IR$ and let $i\in\{1,2\}$. According to Theorem \ref{compilationtheorem}, for each $i\in\{1,2\}$ there exits a measurable deterministic, continuously differentiable function (in its spatial variables) $u_i:[0,T]\times \IR\to \IR$ such that $Y_s^{t,x,i}=u_i(s,X_s^{t,x,i})$  and $Z_s^{t,x,i}= (\nabla_x u_i) (s,X_s^{t,x,i})\sigma(s,X_s^{t,x,i})$ $\IP$-a.s. We have then $\IP$-a.s. that for any $s\in[t,T]$ (recall that $\sigma_i$ is a vector and $\nabla u_i$ a scalar)
\begin{align}
\nonumber
Z^{t,x,1}_s\odot Z^{t,x,2}_s &= \Big( (\nabla_x u_1)(s,X^{t,x,1}_s)\ \sigma_1(s,X^{t,x,1}_s) \Big) \odot \Big( (\nabla_x u_2) (s,X^{t,x,2}_s)\ \sigma_2(s,X^{t,x,2}_s)\Big)  \\
\label{comono-zodotz}
&= \Big(\sigma_1(s,X^{t,x,1}_s)\odot \sigma_2(s,X^{t,x,2}_s)\Big) (\nabla_x u_1)(s,X^{t,x,1}_s) (\nabla_x u)(s,X^{t,x,2}_s).
\end{align}
A standard comparison theorem for SDEs (see \cite{Protter2005}) yields that for any fixed $t$ and $T$ the mappings $x\mapsto X^{t,x,i}_T$ are increasing. This, along with the fact that $g_1$ and $g_2$ are comonotonic functions, implies that for fixed $t$ and $T$ it holds that  $x\mapsto g_1(X^{t,x,1}_T)$ and $x\mapsto g_2(X^{t,x,2}_T)$ are a.s.~comonotonic. A similar argument implies the same conclusion for the drivers $f_i$, i.e.~$x\mapsto f_1(\cdot,X^{t,x,1}_\cdot,\cdot,\cdot)$ and $x\mapsto f_2(\cdot,X^{t,x,2}_\cdot,\cdot,\cdot)$ are a.s. comonotonic.

Using the comparison theorem for quadratic BSDE (see e.g.~Theorem 2.6  in \cite{00Kob}) and the monotonicity (and comonotonicity) of $x\mapsto g_i(X^{t,x,i}_T)$ and $x\mapsto f_i(\cdot,X^{t,x,i}_\cdot,\cdot,\cdot)$ we can conclude that $x\mapsto Y^{t,x,i}$ is also a.s.~monotone. Furthermore, since $x\mapsto g_1(X^{t,x,1}_T)$, $x\mapsto g_2(X^{t,x,2}_T)$, $x\mapsto f_1(\cdot,X^{t,x,1}\cdot,\cdot,\cdot)$ and $x\mapsto f_2(\cdot,X^{t,x,2}_\cdot,\cdot,\cdot)$ are comonotonic the same comparison theorem yields that the mappings $x\mapsto Y^{t,x,1}$ and $x\mapsto Y^{t,x,2}$ are also a.s.~comonotonic. Equivalently, one can write for any $(t,x)\in[0,T]\times \IR$ that (notice that $\nabla u$ exists according to Theorem \ref{compilationtheorem})
\begin{align}
\label{aux-for-strict}
\big\langle (\nabla_x u_1)(t,x), (\nabla_x u_2) (t,x)\big\rangle \geq 0.
\end{align}
Therefore, combining \eqref{aux-for-strict} with \eqref{sigmaineq} in (\ref{comono-zodotz}) we easily obtain
\[Z^{t,x,1}_s(\omega)\odot Z^{t,x,2}_s(\omega)\geq 0, \quad \IP\text{-}a.s.\ \omega\in \Omega,\ (t,x)\in[0,T]\times\IR,\quad s\in[t,T].
\]
Under the assumption that $g_1$ and $g_2$ are strictly comonotonic it is clear that inequality \eqref{aux-for-strict} is also strict. Furthermore, if one also assumes that the inequality in \eqref{sigmaineq} holds strictly for any $(t,x)\in[0,T]\times\IR$ then \eqref{eq:ZZineq} also  holds strictly.
\end{proof}
Unfortunately it doesn't seem possible to weaken the assumptions of the previous theorem. The key factor is the representation of $Z^{t,x}$ via the function $Y^{t,x}_t=u(t,x)$ which needs to be continuously uniformly differentiable in the spatial variable and for that one needs Assumption \ref{H2} to hold.

We obtain an interesting conclusion of the previous result if we interpret the forward diffusion of the system as a backward equation. In terms of applications (as mentioned in the introduction) it is the next result that gives a condition that allows the user to conclude the positivity or negativity of the control process. 

In the next result we focus on just one FBSDE so we fix $i=1$ and we omit this index.
\begin{coro}\label{comono-theo-2}
Let the assumption of Theorem \ref{comono-theo-1} hold (fix $i=1$). Take $(t,x)\in[0,T]\times\IR$ and let $(X,Y,Z)$ be the unique solution of the FBSDE 
\begin{align}
\label{loc-19122008-1}
X_t&=x+\int_0^t b(s,X_s)\ud s+\int_0^t \sigma(s,X_s)\udws,\\
\label{loc-19122008-2}
Y_t &=g(X_T) +\int_t^T f(s,X_s,Y_s,Z_s)\uds-\int_t^T Z_s\udws.
\end{align}

Then, if $x\mapsto g(x)$ and $x\mapsto f(\cdot,x,,\cdot,\cdot)$ are  increasing (respectively decreasing) functions, then $Z_t \odot  \sigma(t,X_t)$  is $\IP$-a.s.~positive (respectively negative) for all $t\in[0,T]$. In particular, if the monotonicity of $g$ and $f$ (in $x$) is strict and if $\sigma$ is strictly positive then $Z$ is either strictly positive or strictly negative (according to the monotonicity of $g$ and $f$).
\end{coro}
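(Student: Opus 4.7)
The plan is to deduce the corollary from Theorem \ref{comono-theo-1} by introducing an auxiliary FBSDE whose control component is an a.s.\ strictly positive scalar multiple of $\sigma(\cdot,X_\cdot)$. Concretely, I would keep the forward dynamics identical by setting $b_2=b$ and $\sigma_2=\sigma$ -- so that $X^{t,x,2}=X^{t,x}$ -- and take $f_2\equiv 0$ together with a bounded, smooth, strictly increasing terminal condition $g_2$ (for instance $g_2(x)=\arctan x$). Assumptions \ref{H1} and \ref{H2} are clear for this auxiliary system, so Theorem \ref{compilationtheorem} applies.

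Next I would check the hypotheses of Theorem \ref{comono-theo-1} for the pair $\{(b,\sigma,f,g),(b_2,\sigma_2,f_2,g_2)\}$. Both $g=g_1$ and $g_2$ are increasing (strictly so for $g_2$), hence comonotonic; within system $i=1$ the comonotonicity of $f_1$ and $g_1$ is the hypothesis of the corollary, and within system $i=2$ it is trivial since $f_2\equiv 0$. Condition \eqref{sigmaineq} is automatic because $\sigma_1=\sigma_2=\sigma$ forces $\sigma_j(s,X_s)^2\ge 0$ componentwise, and holds strictly wherever $\sigma$ does not vanish. Theorem \ref{comono-theo-1} then yields $Z^{t,x,1}_s\odot Z^{t,x,2}_s\ge 0$ a.s., strictly if $g$ is strictly increasing and $\sigma$ is strictly positive.

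It remains to identify $Z^{t,x,2}$. Since $f_2\equiv 0$, the BSDE of system $2$ is linear and one obtains $u_2(t,x)=\IE[g_2(X^{t,x}_T)]$, so differentiating under the expectation (justified by Theorem \ref{compilationtheorem}) gives $(\nabla_x u_2)(t,x)=\IE[g_2'(X^{t,x}_T)\,\nabla_x X^{t,x}_T]$. With $g_2'>0$ everywhere and $\nabla_x X^{t,x}_T>0$ $\IP$-a.s., this quantity is strictly positive. The representation formula in Theorem \ref{compilationtheorem} then reads $Z^{t,x,2}_s=(\nabla_x u_2)(s,X_s)\,\sigma(s,X_s)$ with a strictly positive scalar prefactor. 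Substituting this identification into $Z^{t,x,1}_s\odot Z^{t,x,2}_s\ge 0$ and cancelling the positive factor componentwise gives $Z^{t,x,1}_s\odot \sigma(s,X_s)\ge 0$, which is the assertion in the increasing case. The decreasing case follows by choosing $g_2(x)=-\arctan x$ (or by negating $g$ and $f$), and the strict-sign statement under strict monotonicity of $g$, $f$ and strict positivity of $\sigma$ follows from the strict part of Theorem \ref{comono-theo-1}.

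The main obstacle I anticipate is that Theorem \ref{comono-theo-1} only delivers information on the \emph{product} $Z^1\odot Z^2$, so to extract information about $Z^1=Z$ alone one must engineer $Z^2$ to be non-vanishing with a controlled sign. The key idea is to exploit the freedom in choosing system $2$: taking $f_2\equiv 0$ decouples the factor $(\nabla_x u_2)$ from $(Y,Z)$ and exhibits it as a strictly positive Feynman--Kac quantity, which is precisely what makes the componentwise cancellation legitimate and identifies $Z^{t,x,2}$ with a positive multiple of $\sigma(s,X_s)$.
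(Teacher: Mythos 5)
Your proposal is correct, and it follows the same overall strategy as the paper -- feed a second, engineered system into Theorem \ref{comono-theo-1} whose control process is a positive scalar multiple of $\sigma(\cdot,X_\cdot)$ -- but the construction of that auxiliary system is genuinely different. The paper reads the forward SDE itself backwards: it sets $\tilde g(x)=x$, $\tilde f(t,x,y,z)=-b(t,y)$, so that $\tilde Y=X$, $\tilde u$ is the identity and $\tilde Z_s=\sigma(s,X_s)$ exactly, with no prefactor to cancel. You instead take $f_2\equiv 0$ and a bounded strictly increasing $g_2$ such as $\arctan$, identify $Z^{2}_s=(\nabla_x u_2)(s,X_s)\sigma(s,X_s)$ with $(\nabla_x u_2)(t,x)=\IE[g_2'(X^{t,x}_T)\nabla_x X^{t,x}_T]>0$ by Feynman--Kac and positivity of the first variation process, and then cancel the strictly positive scalar componentwise. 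Each route buys something: the paper's choice is shorter and gives $\tilde Z=\sigma(\cdot,X_\cdot)$ on the nose, but its terminal condition $\tilde g(x)=x$ is unbounded and so sits formally outside Assumption \ref{H1} (the paper glosses over this, arguing only that the auxiliary ``BSDE'' is solved by the Markovian forward process); your choice of a bounded $g_2$ keeps the auxiliary system squarely within Assumptions \ref{H1}--\ref{H2}, at the cost of the extra (but easy and fully justified) step of proving $\nabla_x u_2>0$ and dividing it out. Your handling of the decreasing case ($g_2=-\arctan$) and of the strict statement via the strict part of Theorem \ref{comono-theo-1} matches the paper's logic and is sound.
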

\begin{proof}
Throughout let $x\in\IR$ and $t\in[0,T]$.
We prove the statement for the case of $g(x)$ and $f(\cdot,x,\cdot,\cdot)$ being increasing functions (in the spatial variable $x$) and we give a sketch of the proof for the decreasing case. Rewriting SDE \eqref{loc-19122008-1} as a BSDE leads to $X_t=X_T-\int_t^T b(s,X_s)\ud s-\int_t^T \sigma(s,X_s)\ud W_s$. In fact we can still rewrite the above equation in  a more familiar way, namely
\begin{align}\label{loc-19112008-3}
\tilde{Y}_t=\tilde{g}(X_T)+\int_t^T \tilde{f}(s,\tilde{Y}_s)\ud s-\int_t^T \tilde{Z}_s\ud W_s,
\end{align}
where $\tilde{Z}_s=\sigma(s,X_s)$ for $s\in[0,T]$,  $\tilde{g}(x)=x$ and $\tilde{f}(t,x,y,z)=- b(t,y)$.

At this stage we need to clarify the identification $\tilde{Z}_\cdot=\sigma(\cdot,X_\cdot)$. Let us write explicitly the dependence on the parameter $x$ of the solution $(X,\tilde{Y},\tilde{Z})$ of the FBSDE (\ref{loc-19122008-1}), (\ref{loc-19112008-3}), i.e. we write $(X,\tilde{Y} ,\tilde{Z})$ to denote $(X,\tilde{Y},\tilde{Z})$.
Note that the solution of the BSDE (\ref{loc-19112008-3}) is the solution of SDE (\ref{loc-19122008-1}) which is a Markov process. We can then write $\tilde{Y}_\cdot= X_\cdot=\tilde{u}(\cdot,X_\cdot)$ where $\tilde{u}$ is the identity function (infinitely differentiable). Under Assumption \ref{H1} both $X$ an $\tilde{Y}$ are differentiable as a functions of $x$ (see Theorem \ref{compilationtheorem}), we have then  $\tilde{Z}_\cdot=(\nabla_x \tilde{u}) (\cdot,X_\cdot)\sigma(\cdot,X_\cdot)$. And since $\tilde{u}$ is the identity function with derivative being the constant function $1$, it follows immediately that $\tilde{Z}_\cdot=\sigma(\cdot,X_\cdot)$.

Our aim is to use the previous theorem to imply this result. So we only have to check that its assumptions are verified. Comparing the terminal conditions of (\ref{loc-19122008-2}) and (\ref{loc-19112008-3}), i.e. comparing $x\mapsto g(X^x_T)$ with $x\mapsto \tilde{g}(X^x_T)= X^x_T$ it is clear that both functions are almost surely increasing. Further, the driver function $\tilde{f}$ of BSDE \eqref{loc-19112008-3} is given by $\tilde{f}(t,x,y,z)=\tilde{f}(t,y)=-b(t,y)$ which is independent of $x$. Clearly $x\mapsto \tilde{f}(\cdot,x,\cdot,\cdot)$ and $x\mapsto f(\cdot,x,\cdot,\cdot)$ are comonotonic.

Theorem \ref{comono-theo-1} applies and we conclude immediately that
\begin{align}
\label{Zodotsigma}
Z_t\odot \tilde{Z}_t=Z_t\odot \sigma(t,X_t)\geq 0,~\IP\text{-}a.s.\quad t\in[0,T].
\end{align}

For the other case, when $g$ is a decreasing function, the approach is very similar. We rewrite the SDE (\ref{loc-19122008-1}) in the following way, 
\[
-X_t=-X_T+\int_t^T b(s,X_s)\ud s-\int_t^T\big[ -\sigma(s,X_s)\big]\ud W_t,\quad t\in[0,T].
\]
The terminal condition of the above BSDE is given by $x\mapsto \tilde{g}(x)=-x$ evaluated at $x=X_T$ and the driver $\tilde{f}(t,x,y,z)=\tilde{f}(t,y)=b(t,-y)$ which is independent of $x$. Since $\tilde{g}$ is a decreasing function, we obtain our result by comparing the above BSDE with (\ref{loc-19122008-2}) and applying the previous theorem.
\end{proof}
The above corollary allows one to conclude in particular the strict positivity of the control process. If one is only interested in establishing positivity (ignoring strictness), then one can indeed lower the strength of the assumptions.
\begin{lemma}
Let Assumption \ref{H1} hold and $m=1$. Assume that $x\mapsto g(x)$ and $x\mapsto f(\cdot,x,0,0)$ are both monotone increasing then $ Z_t\odot \sigma(t,X_t)\geq 0,$ $\IP$-a.s. for all $t\in[0,T]$. If $x\mapsto g(x)$ and $x\mapsto f(\cdot,x,\cdot,\cdot)$ are both monotone decreasing then $Z_t\odot \sigma(t,X_t)\leq 0$, $\IP$-a.s. for all $t\in[0,T]$.
\end{lemma}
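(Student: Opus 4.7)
The plan is to reduce to Corollary~\ref{comono-theo-2} by an approximation argument that trades the missing Assumption~\ref{H2} for smooth surrogates. I would construct $(g^n,f^n)$ by convolving $g$ and $f$ with a nonnegative, symmetric, compactly supported mollifier $\rho_n$ of vanishing radius (mollifying $f$ in the spatial variables $(x,y,z)$), leaving $b$ and $\sigma$ untouched so that the forward process $X$ is common to all approximations. Nonnegativity of $\rho_n$ preserves monotonicity in $x$; the growth and derivative bounds of Assumption~\ref{H1} pass to $(g^n,f^n)$ with constants depending only on those for $(g,f)$, and $C^\infty$-smoothness is enough to validate Assumption~\ref{H2} with (possibly $n$-dependent) Lipschitz constants on the second derivatives of $f^n$---this non-uniformity is acceptable because Assumption~\ref{H2} is only needed qualitatively at the next step.

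Corollary~\ref{comono-theo-2} then applies to the $n$-th FBSDE and gives $Z^n_t \odot \sigma(t,X_t) \geq 0$, $\IP$-a.s., for all $t\in[0,T]$, where $(Y^n,Z^n)$ solves the BSDE with driver $f^n$ and terminal $g^n(X_T)$. The remaining task is to pass to the limit. The BMO-based stability results of \cite{IdR2010,reis2011} for quadratic BSDE are the right tool: the uniform Assumption~\ref{H1} bounds on $(g^n,f^n)$ yield a uniform BMO estimate on $\int Z^n\,\ud W$, while locally uniform convergence $f^n \to f$ and uniform convergence $g^n \to g$ force $Y^n \to Y$ in $\cS^\infty$ and $Z^n \to Z$ in $\cH^2$. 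Extracting a $\IP \otimes \ud t$-a.e.\ convergent subsequence then transfers the pointwise inequality to the limit. The decreasing case reduces to the increasing one via the reflection trick from the proof of Corollary~\ref{comono-theo-2}.

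The hard part is the quadratic stability step: the uniform BMO control on $\int Z^n\,\ud W$ must be established from the Assumption~\ref{H1}-level bounds on the approximants (which are uniform in $n$) rather than from the second-order Assumption~\ref{H2}-type bounds (which are not), and the convergence of $Z^n$ to $Z$ must be strong enough that the a.s.\ pointwise inequality survives the limit. A secondary subtlety worth flagging is the asymmetric formulation of the lemma: the increasing case requires only $f(\cdot,x,0,0)$ to be monotone while the decreasing case requires $f(\cdot,x,\cdot,\cdot)$ to be monotone; the approximation route above uses the stronger version in both cases, so if the weaker increasing-case hypothesis is genuinely intended rather than a typographical slip, a separate comparison-based argument that dominates $f$ by an auxiliary driver monotone in $x$ uniformly in $(y,z)$ would be needed there.
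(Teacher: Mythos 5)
Your route---mollify $(g,f)$ to manufacture Assumption \ref{H2}, invoke Corollary \ref{comono-theo-2} for each approximant, then pass to the limit via BMO stability---is genuinely different from the paper's proof and considerably heavier. The paper never goes near the PDE representation $Z=(\nabla_x u)\sigma$ or Corollary \ref{comono-theo-2}. Instead it uses the part of Theorem \ref{compilationtheorem} that holds under Assumption \ref{H1} alone: the Malliavin/variational representation $Z_t=D_tY_t=\nabla_x Y_t(\nabla_x X_t)^{-1}\sigma(t,X_t)$, so that $Z_t\odot\sigma(t,X_t)=\nabla_x Y_t(\nabla_x X_t)^{-1}\,\sigma(t,X_t)\odot\sigma(t,X_t)$. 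Since $\sigma\odot\sigma\geq 0$ and $(\nabla_x X)^{-1}>0$ (a stochastic exponential), everything reduces to the sign of $\nabla_x Y_t$, which the paper reads off from a linearization formula: $\nabla_x Y_t$ is a conditional expectation, under a Girsanov change of measure built from the difference quotients of $f$ in $(y,z)$ (well defined because $\int Z\,\ud W$ is BMO), of nonnegative quantities involving $\nabla_x g\geq 0$, $(\nabla_x f)(\cdot,x,0,0)\geq 0$, the positive process $\nabla_x X$ and a positive exponential weight. No approximation, no stability estimate.

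This difference is not merely one of economy: the subtlety you flag at the end is a genuine gap in your argument for the increasing case as stated. The lemma assumes only that $x\mapsto f(\cdot,x,0,0)$ is increasing, and the paper's linearization is exactly the device that isolates $(\nabla_x f)(r,X_r,0,0)$ (the $y$- and $z$-dependence of $f$ having been absorbed into the exponential weight $e$ and the measure change $\widehat{\IP}$), so the weak hypothesis suffices. Corollary \ref{comono-theo-2} requires $x\mapsto f(\cdot,x,\cdot,\cdot)$ increasing for all $(y,z)$, mollification cannot create monotonicity that is absent away from $(0,0)$, and the ``separate comparison-based argument'' you gesture at is precisely the missing content. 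In addition, the quadratic stability step ($Z^n\to Z$ in $\cH^2$ from uniform Assumption-\ref{H1}-level bounds, strong enough to transfer an a.s.\ pointwise sign for every $t$ after invoking path continuity of $Z$) is real work that you assert rather than carry out. So while your plan would yield the decreasing case and a weakened increasing case, as written it does not prove the stated lemma, and the paper's direct argument is both shorter and strictly stronger.
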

\begin{remark}
Again, as in Remark \ref{caseofmdiff1}, it is possible to state and prove the same result for $m\geq 1$. One needs to impose that $\sigma$ is a square diagonal matrix and the SDE to be a decoupled system.
\end{remark}
\begin{remark}
It is possible to weaken the assumptions of this lemma as was done for Theorem 4.3.6 in \cite{reis2011} or Corollary 2 in \cite{IdRZ2010}. Namely, the conditions are weakened to Lipschitz type conditions with the appropriate Lipschitz ``constant'', then one argues similarly but combining with a regularization argument.
\end{remark}
\begin{proof}
Throughout let $t\in[0,T]$ and $x\in\IR$. Then due to the representation formulas in \eqref{representation} we have $\IP$-a.s. that
\begin{align}
\label{trick}
Z_t\odot\sigma(t,X_t)
= D_t Y_t\odot \sigma(t,X_t)  
= \nabla_x Y_t (\nabla_x X_t)^{-1} \sigma(t,X_t) \odot\sigma(t,X_t).
\end{align}
It is trivial to verify that $\sigma(t,X_t) \odot\sigma(t,X_t)\geq 0$. It remains to establish a result concerning the sign of $\nabla_x Y$ and $(\nabla_x X)^{-1}$.

Under the assumptions it is easy to verify that the solution of \eqref{nablasde} is positive. The solution of $\nabla_x X$ is essentially a positive geometric Brownian motion with a nonlinear drift and volatility which in turn implies that $(\nabla_x X)^{-1}$ is also positive. If we manage to deduce a result concerning the sign of $\nabla_x Y$ we are then able to obtain a weaker version of Corollary \ref{comono-theo-2}. 

The methodology developed to deduce moment and a priori estimates for quadratic BSDE and illustrated in Lemma 3.1 and 3.2 of \cite{IdR2010} (or Chapter 2 in \cite{reis2011}) allow the following equality
\begin{align}
\label{simplifiedeq}
\nabla_x Y_t = \IE^{\widehat{\IP}}\big[
e_T (e_t)^{-1} \nabla_x g(X_t) \nabla X_T
+\int_t^T [e_r e_t^{-1} (\nabla_x f)(r,X_r,0,0)\nabla_x X_r] \ud r
\big|\cF_t\big],
\end{align}
where the process $e$ and the measure $\widehat{\IP}$ are defined as
\[
e_t=\exp\Big\{ \int_0^t \frac{f(r,X_r,Y_r,Z_r)-f(r,X_r,0,Z_r)}{Y_r}\1_{Y_r\neq 0} \ud r \Big\},
\]
and $\widehat{\IP}$ is a probability measure with Radon-Nikodym density given by
\[
\frac{\ud \widehat{\IP}}{\ud \IP}=M_T=\cE\Big( \int_0^T \frac{f(r,X_r,0,Z_r)-f(r,X_r,0,0)}{|Z_r|^2}Z_r\1_{|Z_r|\neq 0} \ud W_r \Big).
\]
Both $(e_t)_{t\in[0,T]}$ and $M$ are well defined. The first because $y\mapsto f(\cdot,\cdot,y,\cdot)$ is assumed to be uniformly Lipschitz and hence $e$ is bounded from above and below and away from zero. The second follows from a combination of the growth assumptions on $\nabla_z f$ and the fact that $\int Z \ud W$ is a bounded mean oscillation martingale\footnote{This observation is key in many results for quadratic BSDE. The stochastic exponential of a BMO martingale is uniformly integrable and defines a proper density. This type of reasoning can be found ubiquitously in \cite{AIdR07} or \cite{IdR2010} for example.} (BMO).

We have already seen that $\nabla X$ is positive and it also trivial to conclude that the process $e$ also is. Given that $g$ and $f$ are differentiable, then saying that these functions are monotonic (in x) boils down to making a statement on the sign of $(\nabla_x g)(x)$ and $(\nabla_x f)(\cdot,x,0,0)$. If one assumes that $g$ and $f(\cdot,x,0,0)$ are monotone increasing in $x$ then $(\nabla g)(x)\geq 0$ and $(\nabla_x f)(\cdot,x,0,0)\geq 0 $ for all $x$. Hence from \eqref{simplifiedeq} (and the remarks above) we conclude that $\nabla_x Y$ is also positive. Returning to \eqref{trick} we have then that $Z_t \odot \sigma(t,X_t)\geq 0$ which proves our result.

The arguments are similar for the case when $g(x)$ and $f(\cdot,x,0,0)$ are
decreasing functions. 
\end{proof}

{\bf Acknowledgments:} The first author would like to thank Peter Imkeller and Ulrich Horst for their comments. The first author gratefully acknowledges the partial support from the CMA/FCT/UNL through project PEst-OE/MAT/UI0297/2011. 

This work was partially supported by the project SANAF  UTA\_{}CMU/MAT/0006/2009.

\bibliographystyle{abbrv}

\end{document}